\documentclass[12pt]{article}  
\usepackage{amsfonts, amsthm, amsmath}
\usepackage{authblk}
\usepackage{graphicx}

\newtheorem{lemma}{Lemma}[section]

\newtheorem{theorem}[lemma]{Theorem}

\theoremstyle{definition}
\newtheorem{definition}[lemma]{Definition}

\newtheorem{example}[lemma]{Example}

\usepackage[margin=0.9in]{geometry}

\def\jref#1 {\hangindent=\parindent\hangafter=1
\smallskip\noindent #1} 

\begin{document}
\baselineskip=16pt
\renewcommand {\thefootnote}{\dag}

\author{Tatyana Barron}
\affil{{\scriptsize{Department of Mathematics, University of Western Ontario, London Ontario N6A 5B7, Canada;  \ tatyana.barron@uwo.ca}}}

\author{Kai Boisvert}
\affil{{\scriptsize{Department of Mathematics, University of Western Ontario, London Ontario N6A 5B7, Canada;  \ kboisve2@uwo.ca\footnote{K.B. was partially supported by a 2025 USRI, funded by the University of Western Ontario and A. Medina-Mardones' NSERC grant RES000678. N.V. was partially supported by a 2025 NSERC USRA.} }}}

\author{Noah Vale}
\affil{{\scriptsize{Department of Mathematics, University of Western Ontario, London Ontario N6A 5B7, Canada;  \ nvale3@uwo.ca$^*$}}}

\title{On vector-valued multisymplectic forms}

\maketitle

\abstract{ We obtain a standard local presentation for a vector-valued multisymplectic form on a smooth manifold, generalizing the known proof for polysymplectic forms.    
We show that vector-valued multisymplectic forms on a finite-dimensional real vector space form a non-unital operad. We prove an entropy inequality for partial compositions.  
} 
  
  \
  
{\bf MSC 2020:}   \ 53D99; 18M60; 94A17

\

{\bf Keywords}: multisymplectic manifold, polysymplectic form, operad, entropy
  
\pagestyle{myheadings}
\markright{{\small {\emph {\bfseries Barron, Boisvert, Vale \hspace{1.5in} 
On vector-valued...}}}}
 
\baselineskip=17pt

\vspace{-0.00in} 
\section{Introduction}

A symplectic form on a smooth manifold $M$ is a closed nondegenerate $2$-form. In differential geometry and applications (especially, to mathematical physics) this definition has been generalized in two ways: 

for a positive integer $k$, a $k$-plectic form on $M$ is a closed nondegenerate $(k+1)$-form; 

for a positive integer $m$, a polysymplectic form is  an ${\mathbb{R}}^m$-valued closed nondegenerate $2$-form on $M$. 

The canonical symplectic form on the cotangent bundle $M=T^*X$ has an appropriate generalization to a multisymplectic form on the multicotangent bundle. This is well known. For polysymplectic manifolds,  C. Blacker \cite{blacker} used a similar approach to obtain a standard local representation of a  polysymplectic form. In Section \ref{sec:darboux}, we extend this result to ${\mathbb{R}}^m$-valued $k$-plectic forms: Theorem \ref{th:canf}.    

A lot of results in symplectic geometry or Poisson geometry are viewed through the classical-quantum perspective. One of such famous results is the proof of the formality conjecture by M. Kontsevich. Operads play an important role in understanding this breakthrough \cite{kontsev}. 

The information theoretic entropy (its various versions) is commonly used      
with algebraic structures such as graphs. See the introduction of \cite{lein} for a discussion about entropy on operads. 
In Section \ref{sec:operad}, we show that on a real finite-dimensional vector space $V$,  the  ${\mathbb{R}}^m$-valued $k$-plectic forms with a fixed value of $k$ \  form a non-unital operad and we discuss entropy. This is Theorem 
\ref{th:operad}.  
We provide examples. 

{\bf Acknowledgement.} We are thankful to Anibal Medina-Mardones and Bruno Vallette for discussions.  
 
\section{Towards an analogue of the Darboux theorem} 
\label{sec:darboux}

Let $n$ be an integer such that $n\ge 2$. 
Let $k$ be a positive integer such that $1\le k\le n-1$. 
Let $m\in{\mathbb{N}}$. 

\subsection{The linear case}
Let $V$ be an $n$-dimensional real vector space. 
\begin{definition}
\label{def:linform}
Let $n,k,m,V$ be as above. 
Let $\omega$ be   a multilinear map 
$$
\omega: {\bigwedge}^{k+1} V\to {\mathbb{R}}^m. 
$$
We say $\omega$ is {\bf   nondegenerate} if the following condition holds: 

if $v\in V$ is such that  $\omega(v,u_1,...,u_k)=0\in {\mathbb{R}}^m$ \  for all $u_1,...,u_k\in V$, then 
$v=0\in V$. 

\noindent We say $\omega$ is a ${\mathbb{R}}^m$-{\bf valued $k$-plectic form} on $V$
if it  is nondegenerate.  

For each $i\in \{ 1,...,m\}$, let $\omega_i$ be the $i$-th component of $\omega$, i.e. 
$$
\omega_i:{\bigwedge}^{k+1} V\to {\mathbb{R}}
$$
is defined by $\omega_i=p_i\circ \omega$, where $p_i:{\mathbb{R}}^m\to {\mathbb{R}}$ is the projection 
$x=(x_1,...,x_m)\mapsto x_i$. 

Denote by $\bar \omega_i$ the map 
$$
\bar \omega_i: V\to ({\bigwedge}^{k} V)^*
$$
induced by $\omega_i$: for $v\in V$, and  $u_1,...,u_k\in V$ 
$$
\bar \omega_i(v)(u_1,...,u_k)=\omega_i(v, u_1,...,u_k).
$$
\end{definition}
\begin{lemma}
\label{lem:nondeg}
Let $n,k,m\in {\mathbb{N}}$, $n\ge 2$, \  $1\le k\le n-1$. 
 Let $V$ be an $n$-dimensional real vector space. 
Let $\omega$ be   a multilinear map 
$\omega: {\bigwedge}^{k+1} V\to {\mathbb{R}}^m$. 
For each $i\in \{ 1,...,m\}$, let $\omega_i$ be the $i$-th component of $\omega$ and let 
$\bar \omega_i$ denote the map 
$\bar \omega_i: V\to ({\bigwedge}^{k} V)^*$
induced by $\omega_i$, as in the Definition \ref{def:linform}. Then $\omega$ is nondegenerate if and only if 
\begin{equation}
\label{eq:kerint}
\bigcap\limits_{i=1}^m \ker (\bar \omega_i)=\{ 0\} .
\end{equation}
\end{lemma}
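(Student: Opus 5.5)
The plan is to unwind the definitions and show that both conditions describe the same subspace of $V$. I will first identify $\bigcap_{i=1}^m \ker(\bar\omega_i)$ with the set
$$
K=\{v\in V:\ \omega(v,u_1,\dots,u_k)=0\in{\mathbb{R}}^m \text{ for all } u_1,\dots,u_k\in V\}.
$$
Nondegeneracy of $\omega$ in the sense of Definition \ref{def:linform} says exactly that $K=\{0\}$. So the lemma reduces to the equality $K=\bigcap_{i=1}^m \ker(\bar\omega_i)$.

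For the equality, let $v\in V$. By definition, $v\in\ker(\bar\omega_i)$ if and only if $\bar\omega_i(v)$ is the zero functional on $\bigwedge^k V$. I will check that this holds if and only if $\omega_i(v,u_1,\dots,u_k)=0$ for all $u_1,\dots,u_k\in V$. The only point needing a remark is that $\bar\omega_i(v)$ is determined by its values on decomposable elements $u_1\wedge\dots\wedge u_k$. This is true because these elements span $\bigwedge^k V$ and $\bar\omega_i(v)$ is linear, which follows from the multilinearity of $\omega$.

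Next, a vector in ${\mathbb{R}}^m$ is zero if and only if all of its coordinates $p_i$ vanish. Hence $\omega(v,u_1,\dots,u_k)=0$ for all $u_j$ if and only if $\omega_i(v,u_1,\dots,u_k)=p_i\circ\omega(v,u_1,\dots,u_k)=0$ for every $i$ and all $u_j$. By the previous paragraph, this is equivalent to $v\in\ker(\bar\omega_i)$ for every $i$. This proves $K=\bigcap_i\ker(\bar\omega_i)$.

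Both directions of the ``if and only if'' then follow at once: $\omega$ is nondegenerate iff $K=\{0\}$ iff (\ref{eq:kerint}) holds. There is no real obstacle. The only step that deserves a sentence is the identification of the zero functional on $\bigwedge^k V$ with vanishing on all $k$-tuples $(u_1,\dots,u_k)$, which rests on decomposable elements spanning $\bigwedge^k V$. I will also note that $0\in K$ always, so the inclusion $\{0\}\subseteq\bigcap_i\ker(\bar\omega_i)$ is automatic.
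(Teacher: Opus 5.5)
Your proposal is correct and takes essentially the same approach as the paper. Both arguments unwind the definitions to see that $v$ lies in every $\ker\bar\omega_i$ exactly when $\omega(v,u_1,\dots,u_k)=0$ for all $u_j$; the paper phrases the reverse direction as a contrapositive, and your remark that decomposable elements span $\bigwedge^k V$ makes explicit a step the paper leaves implicit.
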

{\bf Proof.} $\Rightarrow$  \ Suppose $\omega$ is nondegenerate. Assume $v\in V$ is such that $v\in \ker\bar \omega_i$ for all
$i\in \{ 1,...,m\}$.   Then, for any  choice of $u_1,...,u_k\in V$, and for each 
$i\in \{ 1,...,m\}$
$$
\omega _i(v,u_1,...,u_k)=0.
$$
Since $\omega$ is nondegenerate, it follows that $v=0$. 

$\Leftarrow$ \ 
Suppose (\ref{eq:kerint}) holds. We would like to show that $\omega$ is nondegenerate. Let us prove the contrapositive. Assume there exists  $v\in V$ such that $v\ne 0$ and $\omega(v,u_1,...,u_k)=0\in {\mathbb{R}}^m$ for all $u_1,...,u_k\in V$. Then $\omega_i(v,u_1,...,u_k)=0\in {\mathbb{R}}$ for all $u_1,...,u_k\in V$ and all $i\in \{ 1,...,m\}$. 
Hence $v\in \bigcap\limits_{i=1}^m \ker (\bar \omega_i)$. We showed: if $\omega$ is not nondegenerate, then 
$\bigcap\limits_{i=1}^m \ker (\bar \omega_i)\ne \{ 0\}$. 
$\Box$

When $m=1$, Definition \ref{def:linform} yields a $k$-plectic (multisymplectic) form. When $k=1$, it is a definition 
of a polysymplectic form on $V$.  
Here are two examples with $k=1$ (polysymplectic forms on $V$). 
\begin{example}
Let $n$ be an even integer, let $V$ be an $n$-dimensional real vector space. Let $m\in {\mathbb{N}}$. 
Let $\omega_1$,..., $\omega_m$ be linear symplectic forms on $V$.   Then 
$$
\omega: {\bigwedge}^{2} V\to {\mathbb{R}}^m 
$$
defined by
$$
\omega (u,v)=\begin{pmatrix} 
\omega_1(u,v) \\ .... \\ \omega_m(u,v)
\end{pmatrix}
$$
is an ${\mathbb{R}}^m$-valued $1$-plectic form on $V$. In this example, each $\omega_i$ is nondegenerate. \end{example}
\begin{example}
\label{exmp:crossp}
Let $m=n=3$ and $k=1$. Thus, $V={\mathbb{R}}^3$.  
Define
$$
\omega: {\bigwedge}^{2} V\to {\mathbb{R}}^3
$$
by setting $\omega (u,v)$ to be the cross product of the vectors $u$ and $v$ in ${\mathbb{R}}^3$, i.e.  
$$
\omega (u,v)=\begin{pmatrix} 
\omega_1(u,v) \\ \omega_2(u,v) \\ \omega_3(u,v) 
\end{pmatrix}
$$
where 
$$
\omega_1(u,v)=
u_2v_3-u_3v_2; \ \omega_2(u,v)= 
-u_1v_3+u_3v_1; \ 
 \omega_3(u,v) =
u_1v_2-u_2v_1 .
$$
The forms $\omega_1$, $\omega_2$, $\omega_3$ are not nondegenerate. The form $\omega$ is nondegenerate. 
In the terminology of our Definition \ref{def:linform}, it is an ${\mathbb{R}}^3$-valued $1$-plectic form on $V$.  \end{example}

\subsection{Local canonical form} 
\begin{definition}
Let $E_m\to Y$ be the trivial rank $m$ real vector bundle on an $n$-dimensional manifold $Y$, i.e. 
$E_m=Y\times {\mathbb{R}}^m$. 
An ${\mathbb{R}}^m$-{\bf valued $k$-plectic form} on $Y$ is 
 a smooth section $F$ of $\bigwedge^{k+1}T^*Y\otimes E_m$ such that  $F$ is nondegenerate at every $y\in Y$ (i.e. $F_y$ is an ${\mathbb{R}}^m$-valued $k$-plectic form on the vector space $T_yY$) and $dF=0$. 
\end{definition}
Remark: for a $(k+1)$-form $\alpha$ and an ${\mathbb{R}}^m$-valued function $f$,  \ 
$d(\alpha\otimes f)=d\alpha\otimes f+(-1)^{k+1}\alpha \wedge df$. 
 
 Examples of multisymplectic manifolds ($m=1$, arbitrary $k$) can be found in \cite{cantr}, \cite{rw}, 
  examples of polysymplectic manifolds ($k=1$, arbitrary $m$) can be found in \cite{blacker}.  
A naive attempt to generalize 
the Darboux theorem to the multisymplectic case does not work. In the linear case: the general linear group of an even-dimensional real vector space $V$ acts transitively on the space of skew-symmetric non-degenerate $2$-forms on $V$, but the description of orbits of $GL(V)$ on the space of $k$-plectic forms on a real vector space $V$ is more complicated (see e.g. the discussion and references in \cite{cantr}).  To rephrase, for a given $k$, and a given real vector space $V$, there may be several linearly inequivalent normal forms of  a $k$-plectic form on $V$. 

On a manifold, it is not true that a multisymplectic form is locally of a constant linear type (for a counterexample, see Lemma 4.15 and Example 4.16 in \cite{rw}). 

Theorem 1.1 \cite{blacker} (also see in \cite{blacker} the proof of Theorem 3.6) gives a local form for an aribitrary polysymplectic form on a manifold $M$, analogous to the local expresison of a symplectic form on a manifold in terms of the 
canonical symplectic form on its cotangent bundle. 
While, arguably, this is not a full strength "Darboux-type result", it does give a local presentation of an arbitrary 
polysymplectic form. 
 The theorem that we prove below extends this result to vector-valued multisymplectic forms.

Let $M$ be a smooth $n$-dimensional manifold. Assume $n\ge 2$. 
Let $k$ be a positive integer such that $1\le k\le n-1$. 
Let $m\in{\mathbb{N}}$. Let $E_m\to M$ be the trivial vector bundle $M\times {\mathbb{R}}^m\to M$. 
Denote by 
$$\pi=\pi_{k,m}:{\bigwedge} ^kT^*M\otimes E_m\to M
$$ the projection. Denote by $\Theta=\Theta_{k,m}$ the canonical ${\mathbb{R}}^m$-valued $k$-form  on the manifold 
$X=X_{k,m}= \bigwedge^kT^*M\otimes E_m$. 
For $m=1$ and $k=1$, $\Theta$ is the Liouville $1$-form on $T^*M$, and the $2$-form $d\Theta$ is the canonical symplectic form 
on $T^*M$. In the more general case, when 
$m=1$, and $k$ is an arbitrary positive integer such that $k<n$,  \  $\Theta$ is the $k$-form from \cite{cantr} section 6. 
See also \cite{baezhr} sec. 2. Here, we are generalizing to  the case when $m$ is an arbitrary positive integer.   
The form $\Theta$ is defined by: 

for $(x,\eta)\in X$, where $x\in M$ and $\eta$ in the fiber of the bundle $\bigwedge^kT^*M\otimes E_m$ over $x$,   and $v_1,...,v_k\in T_{(x,\eta)}X$
\begin{equation}
\label{eq:tetadef}
\Theta(v_1,...,v_k)=\eta(\pi_*(v_1),...,\pi_*(v_k)).
\end{equation}
Let $\Omega=d\Theta$. 

It will be useful to write $\Theta$ and $\Omega$ in a local coordinate chart $(U,\phi)$ on $M$, with coordinates 
$(q_1,...,q_n)$. For a multiindex $I=(i_1,...,i_k)$ with $1\le i_1< ...< i_k\le n$ we write 
$$
dq_I=dq_{i_1}\wedge ....\wedge dq_{i_k} 
$$
and we denote by ${\mathcal{J}}$ the set of multiindices $I$ above. 
The forms $dq_I$ form a local basis in the fibers of $\bigwedge^kT^*M$.  Let $e_1$,...,$e_m$ be a frame for $E_m$.
Then, the local coordinates on $X$ are $\{ q_1,...,q_n; p_{I,j} \ |  \ 1\le j\le m; \ I\in {\mathcal{J}}\}$, 
and $\eta=\sum\limits_{j=1}^m\sum\limits_I p_{I,j} \ dq_I\otimes e_j$. 
\begin{lemma} 
\label{lem:oneform}
In the notations above, 

(a) \ $\Theta=\sum\limits_{i=1}^m\sum\limits_I p_{I,i} \ dq_I\otimes e_i$

(b) \ $\Omega=\sum\limits_{i=1}^m\sum\limits_I d(p_{I,i}) \wedge dq_I\otimes e_i$

(c) \ $\Omega$ is an ${\mathbb{R}}^m$-valued $k$-plectic form on $X$.
\end{lemma}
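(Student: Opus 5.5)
For (a), I will evaluate both sides of (\ref{eq:tetadef}) in the chart. In the coordinates $(q_1,\dots,q_n;p_{I,j})$ on $\pi^{-1}(U)$, the projection is $\pi(q,p)=q$. Hence $\pi_*(\partial/\partial q_l)=\partial/\partial q_l$ and $\pi_*(\partial/\partial p_{I,j})=0$, which gives $\pi^*(dq_I)=dq_I$. At the point $(x,\eta)$ with $\eta=\sum_j\sum_I p_{I,j}\,dq_I\otimes e_j$, the right side of (\ref{eq:tetadef}) is $\sum_j\sum_I p_{I,j}\,dq_I(\pi_*v_1,\dots,\pi_*v_k)\,e_j$. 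This is the value of $\sum_j\sum_I p_{I,j}\,(\pi^*dq_I)\otimes e_j$ on $(v_1,\dots,v_k)$. Since the $e_j$ form a constant frame of the trivial bundle, (a) follows.

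For (b), I apply $d$ componentwise, using that the $e_j$ are constant (the Remark with $f=e_j$ constant gives $d(\alpha\otimes e_j)=d\alpha\otimes e_j$). We have $d(p_{I,j}\,dq_I)=dp_{I,j}\wedge dq_I$ because $d(dq_I)=0$, and (b) follows. Consequently $d\Omega=0$, so $\Omega$ is closed, which is half of (c).

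For nondegeneracy in (c), I will use Lemma \ref{lem:nondeg} pointwise, or argue directly. Let $w=\sum_l a_l\,\partial/\partial q_l+\sum_{I,j}b_{I,j}\,\partial/\partial p_{I,j}$ be a tangent vector at a point of $X$, and suppose $\iota_w\Omega=0$. The $j$-th component is
$$\iota_w\Omega_j=\sum_I b_{I,j}\,dq_I-\sum_I dp_{I,j}\wedge \iota_w dq_I .$$
\textbf{Step 1.} Evaluate $\iota_w\Omega_j$ on $k$ coordinate vectors $\partial/\partial q_{i_1},\dots,\partial/\partial q_{i_k}$. The second sum vanishes on these vectors because every term contains some $dp$. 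This leaves $b_{I,j}=0$ for all $I$ and $j$.

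\textbf{Step 2.} Fix $l$ and show $a_l=0$. This is where $k\le n-1$ enters, and I expect it to be the only real obstacle. Choose a multiindex $I$ containing $l$; this is possible since $k\ge1$. Choose also $k-1$ indices in $I\setminus\{l\}$. Evaluate $\iota_w\Omega_j$ on $\partial/\partial p_{I,j}$ together with $\partial/\partial q_i$ for the $i\in I\setminus\{l\}$. Only the term with this particular $I$ survives, and it equals $\pm\, dq_I(w,\partial_{q_{i}},\dots)=\pm a_l$, since the other $q$-directions in $I$ are already occupied.

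More carefully: the term for index $I$ pairs $dp_{I,j}$ with $\partial/\partial p_{I,j}$. It also pairs $\iota_w dq_I$, a $(k-1)$-form, with the $k-1$ vectors $\partial/\partial q_i$ for $i\in I\setminus\{l\}$, giving $\pm a_l$. Terms with $I'\ne I$ vanish because $dp_{I',j}$ kills all of the chosen vectors. The first sum is already zero by Step 1.

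Hence $a_l=0$ for every $l$, so $w=0$ and $\Omega$ is nondegenerate. In fact this works for a single fixed $j$, so each component $\Omega_j$ is even $k$-plectic on its own. Together with closedness from (b), this gives (c).
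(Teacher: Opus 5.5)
Your proof is correct and follows the same route as the paper: coordinate evaluation of (\ref{eq:tetadef}) for (a), componentwise $d$ for (b), and exactness plus a coefficient check for (c). Your Steps 1--2 spell out the coefficient check that the paper only asserts.

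Two of your side comments are wrong and should be dropped. First, Step 2 uses only $1\le k\le n$, not $k\le n-1$. Second, your closing claim that each $\Omega_j$ is $k$-plectic on its own is false for $m\ge 2$. For $j'\neq j$ one has $\iota_{\partial/\partial p_{I,j'}}\Omega_j=0$, because Step 1 applied to a single $j$ only kills the coefficients $b_{I,j}$, not the $b_{I,j'}$.
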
 
Remark: For $m=1$, part (c)  is stated in \cite{cantr}, \cite{baezhr}, and proved in \cite{rw}. 

{\bf Proof of Lemma \ref{lem:oneform}.} 
Recall that $\pi_*:T({\bigwedge} ^kT^*M\otimes E_m)\to TM$, and 
for $v\in T_{(x,\eta)}X$ which is a linear combination of $\dfrac{\partial}{\partial q_i}$ and 
$\dfrac{\partial}{\partial (p_{I,i})}$,  \ 
$\pi_*v$ is the part of this sum which involves the terms with $\dfrac{\partial}{\partial q_i}$. Evaluating the left hand side 
of (\ref{eq:tetadef}) on arbitrary $v_1$,..., $v_k$, we conclude that 
$\Theta=\sum\limits_{i=1}^m\sum\limits_I f_{I,i} \ dq_I\otimes e_i$ (for some coefficients 
$f_{I,i}$), and furthermore 
$\Theta=\sum\limits_{i=1}^m\sum\limits_I p_{I,i} \ dq_I\otimes e_i$. 

This proves (a). Then, $\Omega=d\Theta$, and (b) follows. 

Since $\Omega$ is an exact $(k+1)$-form on $X$, it is closed. Let's show that $\Omega$ is nondegenerate. 
Suppose 
$v\in T_{(x,\eta)}X$ is such that $v\lrcorner \Omega =0$. Writing  
$v\in T_{(x,\eta)}X$ as a linear combination of $\dfrac{\partial}{\partial q_i}$ and $\dfrac{\partial}{\partial (p_{I,i})}$, 
and using (b), we deduce that every coefficient in this linear combination is zero, and therefore $\Omega$ is nondegenerate. 
$\Box$ 

\begin{theorem} 
\label{th:canf}
Let all the notations, including $M$, $n$, $m$, $k$, X, $\Theta$, $\Omega$ be as defined above. Suppose 
$\omega=\omega_{k,m}$ is an ${\mathbb{R}}^m$-valued 
$k$-plectic form on $M$, i.e. 
$$
\omega=\begin{pmatrix}\omega_1 \\ ... \\ \omega_m\end{pmatrix}
$$
where, for each $i\in \{ 1,..., m\}$, $\omega_i$ is a closed $(k+1)$-form on $M$, and 
$\omega$ is nondegenerate. Then for every $x\in M$ there exists a neigborhood $U=U_x$ of $x$ in $M$ and a local smooth embedding $f=f_x:U\to X$ such that $\omega\Bigr |_U=f^*\Omega$.   
\end{theorem}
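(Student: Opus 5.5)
The plan follows Blacker's argument for the polysymplectic case: realize $\omega$ locally as the pullback of $\Omega$ under a section of $\pi$. Fix $x\in M$ and choose a contractible coordinate neighborhood $U$ of $x$, for instance a coordinate ball. Each component $\omega_i$ is a closed $(k+1)$-form on $M$. By the Poincar\'e lemma on $U$, there are $k$-forms $\theta_1,\dots,\theta_m$ on $U$ with $d\theta_i=\omega_i|_U$. Put $\theta=\sum_{i=1}^m\theta_i\otimes e_i$, which is an $\mathbb{R}^m$-valued $k$-form on $U$ with $d\theta=\omega|_U$, where $d$ acts componentwise since the frame $e_i$ is constant.

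Next define $f:U\to X$ by $f(y)=(y,\theta_y)$, the section of $\pi:\bigwedge^kT^*M\otimes E_m\to M$ determined by $\theta$. Since $\pi\circ f=\mathrm{id}_U$, the map $f$ is an injective immersion, and it is an embedding onto its image because it is a section. The key step is the tautological property $f^*\Theta=\theta$. To check it, take $y\in U$ and $w_1,\dots,w_k\in T_yU$. By (\ref{eq:tetadef}),
$$(f^*\Theta)_y(w_1,\dots,w_k)=\theta_y(\pi_*f_*w_1,\dots,\pi_*f_*w_k)=\theta_y(w_1,\dots,w_k),$$
using $\pi_*\circ f_*=\mathrm{id}$. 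One can also verify this in coordinates with Lemma \ref{lem:oneform}(a): writing $\theta=\sum_{i}\sum_I a_{I,i}\,dq_I\otimes e_i$, the map $f$ is given by $p_{I,i}=a_{I,i}(q)$, so $f^*\Theta=\sum_{i}\sum_I a_{I,i}\,dq_I\otimes e_i=\theta$.

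Since pullback commutes with $d$, we get $f^*\Omega=f^*d\Theta=d f^*\Theta=d\theta=\omega|_U$, which proves the theorem. Nondegeneracy of $\omega$ is not actually used in constructing $f$; it is consistent with Lemma \ref{lem:oneform}(c) but plays no role here. The only points needing care are the componentwise application of the Poincar\'e lemma, which is legitimate because $E_m$ is trivial with a constant frame (compare the Remark after the definition), and the tautological identity $f^*\Theta=\theta$. I expect that identity to be the only real step, and it is routine.
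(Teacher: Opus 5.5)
Your proposal is correct and follows essentially the same route as the paper: primitives of the $\omega_i$ on a coordinate ball via the Poincar\'e lemma, the section $f(y)=(y,\theta_y)$, the tautological identity $f^*\Theta=\theta$ from $\pi_*\circ f_*=\mathrm{id}$, and then $f^*\Omega=d\,f^*\Theta=d\theta=\omega|_U$. Your extra remarks are accurate and make explicit what the paper leaves implicit: a section is automatically an embedding, nondegeneracy is not needed, and pullback commutes with $d$.
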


{\bf{Proof}}. Let $x_0\in M$. 
Since each $\omega_i$ is closed, by the Poincar\'e Lemma there exists a neighborhood $U$ of $x_0$, diffeomorphic to an $n$-dimensional ball in ${\mathbb{R}}^n$ centered at $x_0$, in a local chart at $x_0$, such that on $U$, 
$\omega_i=d\alpha_i$ for $k$-forms $\alpha_1$,..., $\alpha_m$.  Define the embedding $f:U\to X$ by 
$f(x)=(x,\alpha)$, where 
$$
\alpha=\alpha_x=\begin{pmatrix} \alpha_1(x) \\ ... \\ \alpha_m(x)\end{pmatrix} 
$$

Let's show: over $U$, $f^*\Theta=\alpha$. Let $x\in U$. Let $\xi_1,...,\xi_k\in T_x(U)$. 
We get: 
$$
(f^*\Theta)(\xi_1,...,\xi_k)=\Theta(f_*(\xi_1),...,f_*(\xi_k)).
$$
From the definitions of $\Theta$ and $f$
$$
\Theta(f_*(\xi_1),...,f_*(\xi_k))=\alpha(\pi_*f_*(\xi_1),...,\pi_*f_*(\xi_k)). 
$$
Because for each $i\in \{1,...,k\}$,  \ $\pi_*f_* (\xi_i)=\xi_i$, we conclude 
$(f^*\Theta)(\xi_1,...,\xi_k)=\alpha(\xi_1,...,\xi_k)$. We showed that  on $U$, \  $f^*\Theta=\alpha$. Since $d\alpha=\omega$ and $d\Theta=\Omega$, it follows that $\omega=f^*\Omega$. 
$\Box$

The following two examples are an illustration of Theorem \ref{th:canf}. 
\begin{example}
Let $M={\mathbb{R}}^3$, $m=3$, $k=1$, and $\omega$ is the polysymplectic (${\mathbb{R}}^3$-valued) form defined in Example  \ref{exmp:crossp}:
$$
\omega=\begin{pmatrix} \omega_1\\ \omega_2\\ \omega_3\end{pmatrix}
$$
where 
$$
\omega_1 =dq_2\wedge dq_3; \ \omega_2 =dq_3\wedge dq_1; \ \omega_3 =dq_1\wedge dq_2.
$$
Choose the $1$-forms $\alpha_j$, $j\in \{1,2,3\}$, such that $\omega _j=d\alpha_j$ as follows: 
$$
\alpha_1 =q_2 \ dq_3; \ \alpha_2 =q_3 \ dq_1; \ \alpha_3 =q_1 \ dq_2.
$$
We have: 
$$
T^*M\simeq {\mathbb{R}}^3\times {\mathbb{R}}^3; \ X=T^*M\otimes {\mathbb{R}}^3.
$$
Let $\{ e_1, e_2, e_3\}$ be the standard basis in ${\mathbb{R}}^3$. Over $x=(q_1,q_2,q_3)\in M$, 
we write $\eta\in T_q^*M\otimes {\mathbb{R}}^3$ as follows:
$$
\eta=(\sum_{i=1}^3p_{1,i} \ dq_1\otimes e_i)+(\sum_{i=1}^3p_{2,i} \ dq_2\otimes e_i)+
(\sum_{i=1}^3p_{3,i} \ dq_3\otimes e_i).
$$
We have:
$$
\Omega=\begin{pmatrix}
dp_{1,1}\wedge dq_1+dp_{2,1}\wedge dq_2+dp_{3,1}\wedge dq_3 \\ 
dp_{1,2}\wedge dq_1+dp_{2,2}\wedge dq_2+dp_{3,2}\wedge dq_3 \\ 
dp_{1,3}\wedge dq_1+dp_{2,3}\wedge dq_2+dp_{3,3}\wedge dq_3 
\end{pmatrix} .
$$
We embed $M$ into $X$ via 
$$
f:M\to X
$$
$$
x\mapsto (x,\alpha).
$$
Therefore, for the points of $X$ that are in $f(M)$, we have 
$$
p_{1,2}=q_3; \ p_{2,3}=q_1; \ p_{3,1}=q_2
$$
and all other $p_{i,j}$ are zero. So, we observe explicitly, that $f^*\Omega=\omega$ (which is the statement of the theorem).   
\end{example}
\begin{example}
Let $M={\mathbb{R}}^6$, $m=1$, $k=2$, and $\omega$ is the multisymplectic ($2$-plectic) form 
from 
Example 4.16 and  Lemma 4.15 of \cite{rw}: 
$$
\omega=dx_1\wedge dx_3\wedge dx_5  - dx_1\wedge dx_4\wedge dx_6 -dx_2\wedge dx_3\wedge dx_6
+x_2dx_2\wedge dx_4\wedge dx_5.
 $$
Choose a $2$-form $\alpha$ such that $\omega=d\alpha$ as follows: 
$$
\alpha=x_1 \  dx_3\wedge dx_5  - x_1 \  dx_4\wedge dx_6 -x_2 \  dx_3\wedge dx_6
+\frac{1}{2}x_2^2 \  dx_4\wedge dx_5.
 $$
We have: 
$$
T^*M\simeq {\mathbb{R}}^6\times {\mathbb{R}}^6; \ X={\bigwedge}^2T^*M.
$$
The local coordinates on $X$ are $(q_1,q_2,q_3,q_4,q_5,q_6;p_{ij} \ | \ 1\le i<j\le 6\}$, 
where $\{ p_{ij}\}$ are the fiber coordinates with respect to the basis $\{ dq_i\wedge dq_j\}$. We have: 
$$
\Omega=\sum_{1\le i<j\le 6}dp_{ij}\wedge dq_i\wedge dq_j.
$$
We embed $M$ into $X$ via 
$$
f:M\to X
$$
$$
x\mapsto (x,\alpha).
$$
Therefore, for the points of $X$ that are in $f(M)$, we have 
$$
p_{35}=q_1; \ p_{46}=-q_1; \ p_{36}=-q_2; \ p_{45}=\frac{1}{2}q_2^2
$$
and all other $p_{ij}$ are zero. Therefore, we can see explicitly that $f^*\Omega=\omega$. 
\end{example}

\section{Operads and entropy}
\label{sec:operad}

\begin{definition}\label{def:dop1} [Definition 1.1 \cite{kimura}, Definition 12.1.1 \cite{lein}] An {\bf operad}  ${\mathcal{P}}$  is a collection of sets $P(n)$, 
$n\ge 1$, with an action of the symmetric group $\Sigma_n$ on $P_n$, a composition law 
 $$
\lambda: P(k)\times P(n_1)\times ...\times P(n_k)\to P(n_1+ ...+n_k)
$$
$$
(f;f_1,...,f_k)\mapsto \lambda (f;f_1,...,f_k)
$$
and a unit $e\in P(1)$, such that the following properties are satisfied: 

(i) the composition is equivariant with respect to the symmetric group actions: 

for $\sigma\in \Sigma_{k}$, $\sigma_1\in \Sigma_{n_1}$ \ , ...,  \ $\sigma_k\in \Sigma_{n_k}$, $f\in P(k)$, $f_1\in P(n_1)$ \ ,...,  \ $f_k\in P(n_k)$
$$
(f;f_1\sigma_1,...,f_k\sigma_k)=( \lambda (f;f_1,...,f_k))\sigma'
$$
where $\sigma'$ is the image of $\sigma_1\times ...\times \sigma_k$ under the obvious map  
$\Sigma_{n_1}\times ...\times \Sigma_{n_k}\to 
\Sigma_{n_1+...+n_k}$, and 
$$
(f\sigma ;f_1,...,f_k)=( \lambda (f;f_{\sigma ^{-1}(1)},...,f_{\sigma ^{-1}(k)}))\sigma''
$$
where $\sigma ''\in \Sigma_{n_1+...+n_k}$ acts like $\sigma$ on the set of $k$ blocks of the sizes 
$n_1$,...,$n_k$ (acting as the identity map within each of the blocks). 

(ii) the composition is associative:
$$
\lambda( \ \lambda(\xi;\phi_1,...,\phi_n) \ ; \ \psi_{11},...,\psi_{1k_1},  \  \psi_{21},...,\psi_{2k_2},..., \ \psi_{n1},...,\psi_{nk_n})=
$$
$$
\lambda( \ \xi; \ \lambda(\phi_1; \ \psi_{11},...,\psi_{1k_1}),  \ 
\lambda (\phi_2;  \  \psi_{21},...,\psi_{2k_2}), ..., \ \lambda(\phi_n; \ \psi_{n1},...,\psi_{nk_n}))
$$

(iii) for each $f\in P_k$ we have $\lambda (e;f)=f$ and $\lambda(f;\underbrace{e,...,e}_{k})=f$.

\end{definition}
\begin{definition} \label{def:dop2}
[\cite{loday} sec. 5.3.4] An {\bf operad}  ${\mathcal{P}}$  is a collection of sets $P(n)$, 
$n\ge 1$, with an action of the symmetric group $\Sigma_n$ on $P_n$, and partial  compositions 
 $$
\circ_i: P(n)\times P(m)\to P(n+m-1)
$$
for $i\in \{ 1,...,n\}$, 
and a unit $e\in P(1)$, such that the following properties are satisfied: 

(i) the partial compositions are compatible with the symmetric group actions, as governed by the condition (i) of Definition \ref{def:dop1}, 

(ii) for $f\in P(l)$, $g\in P(m)$, $h\in P(n)$
$$
(f \circ_k h)\circ_i g=(f \circ_i g)\circ_{k-1+m} h
$$
for $1\le i<k\le l$; and 
$$
f \circ_i (g\circ_j h)=(f \circ_i g)\circ_{i-1+j} h
$$
for $1\le i\le  l$, \ $1\le j\le m$. 

(iii) for each $f\in P_k$ we have $e\circ_1 f=f$ and $f\circ_i e=f$.

\end{definition}
Equivalence between the Definitions \ref{def:dop1} and \ref{def:dop2} is established by the proof of Prop. 5.3.4 in \cite{loday}. Either definition can be modified to a definition of a {\it non-unital operad} (Remark 1.1 \cite{kimura}): a collection of sets $P(n)$, $n\ge 2$, with a composition and a symmetric group action, satisfying (i) and (ii).     

\begin{theorem} \label{th:operad}
Let $n$ be an integer such that $n\ge 2$. 
Let $k$ be a positive integer such that $1\le k\le n-1$. 
Let $V$ be an $n$-dimensional real vector space. 

For each integer $m\ge 2$, let $P(m)$ be the set of ${\mathbb{R}}^m$-valued $k$-plectic forms on $V$. 
Let $\circ_i$ be defined by: 
$$
\circ_i: P(p)\times P(q)\to P(p+q-1),
$$
for 
$$
\beta=\begin{pmatrix} \beta_1 \\ ...\\ \beta_p \end{pmatrix}; \ 
\alpha=\begin{pmatrix} \alpha_1 \\ ...\\ \alpha_q \end{pmatrix}
$$
\begin{equation}
\label{eq:circi}
\beta\circ_i \alpha=\begin{pmatrix} \beta_1 \\ ...\\ \beta_{i-1} \\ \alpha_1 \\ ...\\ \alpha_q \\ \beta_{i+1} \\ ... \\ 
\beta_p \end{pmatrix}.
\end{equation}
Also, for $\alpha\in P(q)$ and for a choice of $v_1,...,v_{k+1}\in V$, such that for each $j\in \{1,...,q\}$ 
$\alpha_j(v_1,...,v_{k+1})\ne 0$,  define 
\begin{equation}
\label{eq:defentr}
E(\alpha)=-\sum_{j=1}^q \frac{(\alpha_j(v_1,...,v_{k+1}))^2}{\sum\limits_{i=1}^q(\alpha_i(v_1,...,v_{k+1}))^2}
\ln \frac{(\alpha_j(v_1,...,v_{k+1}))^2}{\sum\limits_{i=1}^q(\alpha_i(v_1,...,v_{k+1}))^2} 
\end{equation}
(we suppress the choice of the $k+1$ vectors in the notation for $E$).

Then 

(a) the sets $P(m)$ and the compositions $\circ_i$ define a non-unital operad; 

(b) for $\alpha\in P(q)$, and for a choice of $v_1,...,v_{k+1}\in V$, such that 
for each $j\in \{1,...,q\}$ 
$\alpha_j(v_1,...,v_{k+1})\ne 0$, we have 
$$
E(\alpha\circ_i \alpha)\le 2E(\alpha)+\ln 2;
$$

(c) for $\beta\in P(p)$, $\alpha\in P(q)$,  and for a choice of $v_1,...,v_{k+1}\in V$, such that 
for each $j\in \{1,...,p\}$  \ 
$B_j:=(\beta_j(v_1,...,v_{k+1}))^2\ne 0$, 
and for each $j\in \{1,...,q\}$ 
$A_j=(\alpha_j(v_1,...,v_{k+1}))^2\ne 0$, 
and such that for $i\in \{ 1,...,p\}$, $B_i=A:=\sum\limits_{j=1}^qA_j$, 
we have 
$$
E(\beta\circ_i \alpha)= E(\beta)+\frac{A}{B}E(\alpha)
$$
where $B$ denotes $\sum\limits_{j=1}^pB_j$; 

(d) for a $(k+1)$-form $\gamma^{(q)}=\begin{pmatrix} \gamma  \\ ... \\ \gamma \end{pmatrix}\in P(q)$ 
and a choice of $v_1,...,v_{k+1}\in V$, such that $\gamma(v_1,...,v_{k+1})=c\ne 0$, 
$$
E(\gamma ^{(q)})=\ln q. 
$$
\end{theorem}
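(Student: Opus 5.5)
We take the four parts in turn. Parts (b), (c), (d) rest on the grouping (chain) rule for Shannon entropy. For nonnegative weights $w_1,\dots,w_r$ with total $W$, write $H(w)=-\sum_j (w_j/W)\ln(w_j/W)$. If the index set is partitioned into blocks with block sums $W_1,\dots,W_s$ and within-block entropies $H^{(1)},\dots,H^{(s)}$, then
$$
H(w)=H(W_1,\dots,W_s)+\sum_{b=1}^s \frac{W_b}{W}H^{(b)} .
$$
By definition, $E(\alpha)=H(A_1,\dots,A_q)$ with $A_j=(\alpha_j(v_1,\dots,v_{k+1}))^2$.

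For (a), we first check that $\circ_i$ lands in $P(p+q-1)$. Suppose $v$ satisfies $(\beta\circ_i\alpha)(v,u_1,\dots,u_k)=0$ for all $u_1,\dots,u_k$. Then in particular $\alpha_j(v,u_1,\dots,u_k)=0$ for all $j$ and all $u$'s, so $v\in\bigcap_j\ker\bar\alpha_j$. This intersection is $\{0\}$ by Lemma \ref{lem:nondeg}, since $\alpha$ is nondegenerate. Hence $\beta\circ_i\alpha$ is nondegenerate.

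Next we let $\Sigma_m$ act on $P(m)$ by permuting components, $(\omega\sigma)_j=\omega_{\sigma(j)}$. Nondegeneracy is clearly preserved, because the intersection of kernels does not depend on the order of the components.

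With the action in place, the axioms (i) and (ii) of Definition \ref{def:dop2} become statements about substituting one list into a slot of another list. For $(f\circ_k h)\circ_i g$ with $i<k$, the list $g$ sits at position $i$ and $h$ occupies positions starting at $k-1+m$, which agrees with $(f\circ_i g)\circ_{k-1+m}h$. Similarly $f\circ_i(g\circ_j h)=(f\circ_i g)\circ_{i-1+j}h$, since both sides insert $h$ into the $j$-th slot of $g$, which itself replaces slot $i$ of $f$. Equivariance is the same bookkeeping with permuted indices. This index bookkeeping is routine but is the most tedious part.

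For (c), we partition the entries of $\beta\circ_i\alpha$ into the singletons $B_j$ for $j\ne i$ and one block $\{A_1,\dots,A_q\}$. Since $\sum_j A_j=A=B_i$, the block sums are exactly $B_1,\dots,B_p$ and the total is $B$. The grouping rule then gives $E(\beta\circ_i\alpha)=E(\beta)+\frac{A}{B}E(\alpha)$ directly.

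For (d), all $q$ weights equal $c^2$, so the normalized distribution is uniform and its entropy is $\ln q$.

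For (b), write $S=\sum_j A_j$ and $T=2S-A_i$, which is the total weight of $\alpha\circ_i\alpha$. Group its entries into two blocks:
\begin{itemize}
\item block $1$, the entries $A_j$ with $j\ne i$ coming from the outer $\alpha$, of weight $S-A_i$ and entropy $H_1$;
\item block $2$, the inserted copy of $\alpha$, of weight $S$ and entropy $E(\alpha)$.
\end{itemize}
The grouping rule gives
$$
E(\alpha\circ_i\alpha)=h+\frac{S-A_i}{T}H_1+\frac{S}{T}E(\alpha),
$$
where $h$ is the entropy of a two-point distribution, so $h\le\ln 2$. Also $S/T\le 1$.

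It remains to control the middle term. Apply grouping to $\alpha$ itself, splitting off the singleton $\{A_i\}$: this gives $E(\alpha)=h'+\frac{S-A_i}{S}H_1\ge \frac{S-A_i}{S}H_1$. Since $T\ge S$, we get $\frac{S-A_i}{T}H_1\le\frac{S-A_i}{S}H_1\le E(\alpha)$. Combining, $E(\alpha\circ_i\alpha)\le \ln2+2E(\alpha)$.

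The only delicate points are this comparison of the block weights and the degenerate case $q-1=0$, which cannot occur because $q\ge2$.
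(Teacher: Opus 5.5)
Your proposal is correct. Parts (a), (c) and (d) follow the paper:
\begin{itemize}
\item In (a), nondegeneracy of $\beta\circ_i\alpha$ comes from $\bigcap_j\ker\bar\alpha_j=\{0\}$ via Lemma~\ref{lem:nondeg}, and the axioms reduce to list-substitution bookkeeping. You usefully make the right action $(\omega\sigma)_j=\omega_{\sigma(j)}$ explicit, which the paper leaves implicit.
\item In (c), the paper's displayed computation is exactly the grouping rule for $p-1$ singletons and one block of weight $B_i=A$.
\end{itemize}

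The genuine difference is in (b). The paper takes $i=1$ and expands all the logarithms to reach
\[
E(\alpha\circ_1\alpha)=\frac{2A}{2A-A_1}E(\alpha)+\ln(2A-A_1)-\frac{2A\ln A-A_1\ln A_1}{2A-A_1}.
\]
It then collapses the remainder to $\ln 2$ with three elementary estimates: $\frac{2A}{2A-A_1}\le 2$, $\ln(2A-A_1)\le\ln(2A)$, and $A_1\ln A_1\le A_1\ln A$. You instead apply the chain rule twice and compare block weights.

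Both routes rest on the same exact identity. With $S=A$, your two decompositions combine to $E(\alpha\circ_i\alpha)=h-\frac{S}{T}h'+\frac{2S}{T}E(\alpha)$, and the paper's remainder term equals $h-\frac{S}{T}h'$.

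What your version buys:
\begin{itemize}
\item It handles arbitrary $i$ directly rather than ``similarly''.
\item It explains the $\ln 2$ as the binary entropy of the split between the outer remnant and the inserted copy.
\item It puts (b) and (c) under one standard lemma.
\end{itemize}

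What the paper's version buys is self-containment. You quote the grouping rule without proof. A one-line check closes that: write $\ln(w_j/W)=\ln(W_b/W)+\ln(w_j/W_b)$ for $j$ in block $b$ and sum.
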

{\bf Remarks}.
\begin{itemize}
\item In the assumptions of the Theorem, $V$ and $k$ are fixed. The integer $m$ is allowed to vary and to take arbitrary values that are strictly greater than $1$.   
\item In (c), for each $i$, we can always normalize $\alpha$ to satisfy $A=B_i$. Indeed, define 
$$
\tilde{\alpha}=\sqrt{\frac{B_i}{A}} \ \alpha.
$$
Then $\tilde{\alpha}\in P(q)$ and 
$$
\sum\limits_{j=1}^q(\tilde{\alpha}_j(v_1,...,v_{k+1}))^2=\sum\limits_{j=1}^q\frac{B_i}{A}A_j=B_i.
$$

\item The expression (\ref{eq:defentr}) is the standard information-theoretic entropy. It has the maximum value $\ln q$. 
Thus, the entropy in (d) is the maximum entropy. We also note that (d) applies to the canonical 
$k$-plectic ${\mathbb{R}}^q$-valued form on $V$.       
\end{itemize}
{\bf Proof.} Proof of (a). We will  verify the Definition \ref{def:dop2}, modified to a nonunital operad. First, we observe that 
$\circ_i$ is well-defined, meaning that $\beta\circ_i\alpha$ defined by (\ref{eq:circi}) is indeed in $P(p+q-1)$. 
It is a ${\mathbb{R}}^{p+q-1}$-valued $(k+1)$-form on $V$.  Because 
$\alpha$ is nondegenerate, by Lemma \ref{lem:nondeg},  $\bigcap\limits_{i=1}^q \ker (\bar \alpha_i)=\{ 0\}$. 
Then 
$$
(\bigcap\limits_{i=1}^q \ker (\bar \alpha_i))\cap \ker (\bar \beta_1)\cap...\cap \ker (\bar \beta_{i-1})  
\cap \ker (\bar \beta_{i+1})\cap ... \cap \ker (\bar \beta_{p}) =\{ 0\}.
$$
Hence $\beta\circ_i\alpha$ is nondegenerate. 

Both equalities in part (ii) of Definition \ref{def:dop2} follow directly from (\ref{eq:circi}). It remains to verify part (i) of Definition \ref{def:dop2}.  Let $\beta\in P(p)$ and $\alpha\in P(q)$. Using (\ref{eq:circi}), we obtain the following. 
For $\sigma\in \Sigma_q$ 
$$
\beta\circ_i(\alpha\sigma)= ( \beta\circ_i \alpha)\sigma'
$$
where $\sigma'$ refers to the image of $\sigma$ under the obvious homomorphism $\Sigma_q\to \Sigma_{p+q-1}$. 
For $\sigma\in \Sigma_p$ 
$$
(\beta \sigma)\circ_i \alpha= ( \beta\circ_{\sigma^{-1}(i)}\alpha)\sigma''
$$
where $\sigma''$ is the element of $\Sigma_{p+q-1}$ that acts like $\sigma$ on the set of $p$ blocks (the $p-1$ blocks of size $1$ and $1$ block of size $q$), and acts as the identity map within the size $q$ block. 

Proof of (b). To simplify notations, we write a proof for $i=1$. The proof for an arbitrary $i$ is similar. Denote  
$$
A=\sum_{j=1}^q (\alpha_j(v_1,...,v_{k+1}))^2; 
A_1=(\alpha_1(v_1,...,v_{k+1}))^2.
$$
We have: 
$$
E(\alpha)=-\sum_{j=1}^q \frac{(\alpha_j(v_1,...,v_{k+1}))^2}{A}\ln \frac{(\alpha_j(v_1,...,v_{k+1}))^2}{A}
$$
$$
(\alpha\circ_1\alpha) (v_1,...,v_{k+1})=\begin{pmatrix}  \alpha_1 (v_1,...,v_{k+1})\\ ...\\ 
\alpha_q (v_1,...,v_{k+1})\\ 
\alpha_2 (v_1,...,v_{k+1})\\ ... \\ 
\alpha_q (v_1,...,v_{k+1})\end{pmatrix} 
$$
$$
E(\alpha\circ_1 \alpha)=
-\frac{1}{2A-A_1}
\Bigl (
A_1\ln \frac{A_1}{2A-A_1}+
2\sum_{j=2}^q (\alpha_j(v_1,...,v_{k+1}))^2\ln \frac{(\alpha_j(v_1,...,v_{k+1}))^2}{2A-A_1} \Bigr ) =
$$
$$
-\frac{1}{2A-A_1}
\Bigl (
2\sum_{j=1}^q (\alpha_j(v_1,...,v_{k+1}))^2\ln (\alpha_j(v_1,...,v_{k+1}))^2-
$$
$$
A_1\ln A_1-2A\ln (2A-A_1)+A_1\ln (2A-A_1)\Bigr ) =
$$
$$
-\frac{1}{2A-A_1}
\Bigl (
-2AE(\alpha)+2A\ln A
-A_1\ln A_1-2A\ln (2A-A_1)+A_1\ln (2A-A_1)\Bigr ) \le 
$$
$$
2E(\alpha)+\ln (2A-A_1)
-\frac{2A\ln A}{2A-A_1}+\frac{A_1\ln A_1}{2A-A_1} \le 
2E(\alpha)+\ln (2A)
-\frac{2A\ln A}{2A-A_1}+\frac{A_1\ln A}{2A-A_1}=
$$
$$
2E(\alpha)+\ln 2.
$$
Proof of (c). Using (\ref{eq:circi}), and taking into consideration that $A=B_i$, we get: 
    \begin{flalign*}
        E(\beta \circ_i \alpha) &= -\left(\sum_{j \neq i} \frac{B_j}{B} \ln \frac{B_j}{B}\right) - \left(\sum_{r=1}^{q} \frac{A_r}{B} \ln \frac{A_r}{B}\right) \\
        &= -\left(\sum_{j \neq i} \frac{B_j}{B} \ln \frac{B_j}{B}\right) - \left(\sum_{r=1}^{q} \left(\frac{B_i}{B} \ \frac{A_r}{A}\right) \left(\ln \frac{B_i}{B} + \ln \frac{A_r}{A}\right)\right) \\
        &= -\left(\sum_{j \neq i} \frac{B_j}{B} \ln \frac{B_j}{B}\right) - \left(\left(\frac{B_i}{B} \ln \frac{B_i}{B}\right) \frac{1}{A}\sum_{r=1}^{q} A_r\right) - \left(\frac{B_i}{B}\sum_{r=1}^{q}  \frac{A_r}{A} \ln \frac{A_r}{A}\right) \\
        &= E(\beta) + \frac{B_i}{B}E(\alpha).
    \end{flalign*}
Proof of (d). Each of the $q$ terms in the sum (\ref{eq:defentr}) equals \  $\dfrac{1}{q}\ln\dfrac{1}{q}$, and the conclusion follows.  
$\Box$
\begin{example} Consider, in the operad of Theorem \ref{th:operad} on $V={\mathbb{R}}^3$, with $k=1$, \  $\omega
=\begin{pmatrix} \omega_1  \\ \omega_2 \\ \omega_3\end{pmatrix} \in P(3)$ 
defined by the cross product of vectors (Example \ref{exmp:crossp}). Choose and fix $u,v\in V$ such that 
$c_i=\omega_i(u,v) \ne 0$ for each $i\in \{ 1,2,3\}$. We have: 
$$
\omega\circ_1\omega=\begin{pmatrix}  \omega_1 \\ \omega_2 \\ 
\omega_3 \\ 
\omega_2 \\ 
\omega_3 \end{pmatrix} \in P(5); \ 
(\omega\circ_1\omega)\circ_1\omega=\begin{pmatrix}  \omega_1 \\ \omega_2 \\ 
\omega_3 \\ 
\omega_2 \\ 
\omega_3 \\
\omega_2 \\ 
\omega_3
\end{pmatrix}\in P(7)
$$
and after applying $\circ_1\omega$ consecutively $j$ times, we get an element of $P(3+2j)$ whose value on $u,v$ is this  vector 
in ${\mathbb{R}}^{3+2j}$:
$$ 
   \begin{pmatrix}  \omega_1 (u,v)\\ \omega_2 (u,v) \\ 
\omega_3 (u,v)\\ 
\omega_2 (u,v)\\ 
\omega_3 (u,v)\\ 
... \\
\omega_2 (u,v)\\ 
\omega_3 (u,v)
\end{pmatrix},
$$
the value of entropy is
$$
E=-\frac{1}{c_1^2+(j+1)(c_2^2+c_3^2)}\Bigl (  c_1^2\ln \frac{c_1^2}{c_1^2+(j+1)(c_2^2+c_3^2)}
+(j+1)c_2^2\ln \frac{c_2^2}{c_1^2+(j+1)(c_2^2+c_3^2)}+
$$
\begin{equation}
\label{eq:entr3}
(j+1)c_3^2\ln \frac{c_3^2}{c_1^2+(j+1)(c_2^2+c_3^2)}\Bigr )
\end{equation}
and we also point out that the {\it {disorder}} $D=\dfrac{E}{E_{max}}$  \  (see \cite{davison}, \cite{lands}) equals 
\begin{equation}
\label{eq:dis3}
D=\frac{E}{\ln(3+2j)}.
\end{equation}
Figures \ref{fige}, \ref{figd} show the curves 
\begin{equation}
\label{eq:cure}
y(x)=\frac{1}{x+11}\Bigl (10\ln (0.1 x+1.1)+(x+1)\ln (2x+22)\Bigr )
\end{equation}
\begin{equation}
\label{eq:curd}
y(x)=\frac{1}{(x+11)\ln (2x+3)}\Bigl (10\ln (0.1 x+1.1)+(x+1)\ln (2x+22)\Bigr )
\end{equation}
obtained from (\ref{eq:entr3}) and (\ref{eq:dis3}) with specific values: $c_1^2=10$, $c_2^2=c_3^2=0.5$. 
\begin{figure}[htb]
\begin{minipage}{0.42\textwidth}
\centering
\includegraphics[width=2.2in]{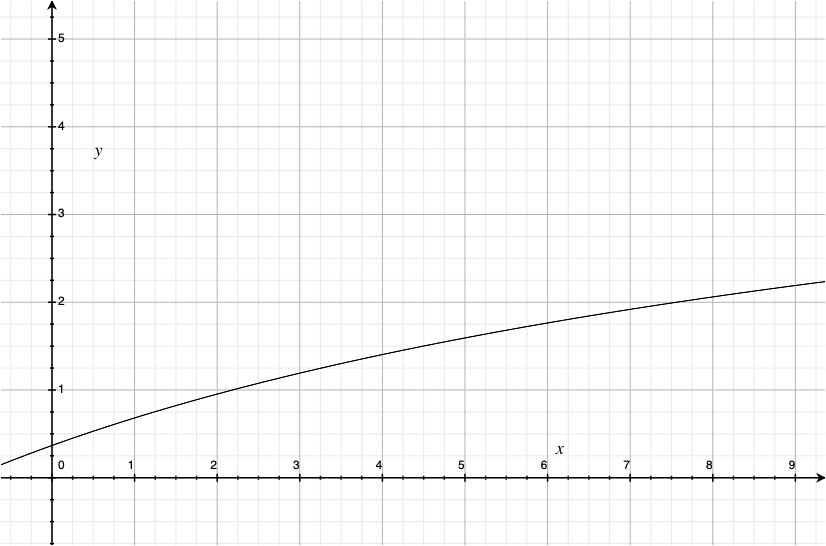}
\caption{The curve (\ref{eq:cure}).} \label{fige}
\end{minipage}
\begin{minipage}{0.42\textwidth}
\centering
\includegraphics[width=2.2in]{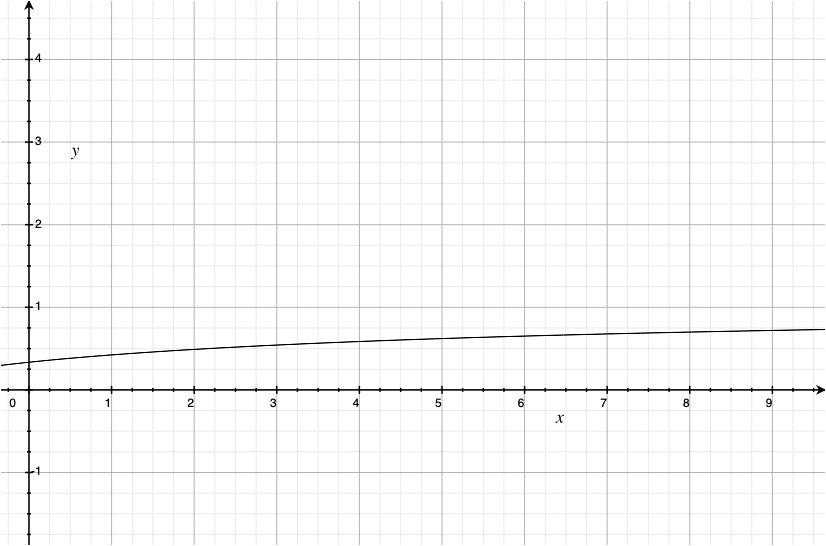}
\caption{The curve (\ref{eq:curd}).} \label{figd}
\end{minipage}
\end{figure}

The table below contains approximate (calculated) values of entropy (\ref{eq:cure}) and disorder (\ref{eq:curd}) for $x\in \{ 1,2,3,4,5\}$.    

\begin{tabular}{|r|r|r|}
  \hline
  &&\\
$j$&$E=y(j)$ from eq.  (\ref{eq:cure}) & $D=\dfrac{E}{\ln(2j+3)}$ \\
&&\\
\hline
1 \  &0.6816&0.4235\\
  \hline
2 \ &0.9537&0.4901  \\
  \hline
  3 \ &1.1924&0.5427   \\
\hline
4 \ &1.4040& 0.5855\\
  \hline
5 \ &1.5934&0.6212\\
\hline
\end{tabular}

\end{example}


\begin{thebibliography}{00}


\bibitem{baezhr}J. Baez, A.  Hoffnung, C.  Rogers. 
Categorified symplectic geometry and the classical string. 
Comm. Math. Phys. 293 (2010), no. 3, 701-725.

\bibitem{blacker}
C. Blacker. 
Polysymplectic reduction and the moduli space of flat connections. 
J. Phys. A 52 (2019), no. 33, 335201, 35 pp.

\bibitem{cantr}F. Cantrijn, A.  Ibort,  M. de Le\'on. 
On the geometry of multisymplectic manifolds. 
J. Austral. Math. Soc. Ser. A 66 (1999), no. 3, 303-330.

\bibitem{davison}M. Davison, J. Shiner. 
 Extended entropies and disorder.  Adv. Complex Syst. 8 (2005), no.1, 125-158.

\bibitem{kimura}T. Kimura, J.  Stasheff, A.  Voronov.  On operad structures of moduli spaces and string theory. Commun. Math. Phys.  171 (1995), 1-25. 

\bibitem{kontsev}M. Kontsevich. Operads and motives in deformation quantization. 
Lett. Math. Phys. 48 (1999), no. 1, 35-72.

\bibitem{lands}P. Landsberg. 
Can entropy and ``order'' increase together ? Phys. Lett. A 102 (1984), no. 4, 171--173.

\bibitem{lein}T. Leinster.  Entropy and Diversity: The Axiomatic Approach. Cambridge University Press; 2021. 

\bibitem{loday}J. Loday, B. Vallette. Algebraic operads. Springer Berlin Heidelberg, Grundlehren der mathematischen Wissenschaften, vol. 346,  2012. 

\bibitem{rw}L. Ryvkin, T.  Wurzbacher. 
An invitation to multisymplectic geometry. 
J. Geom. Phys. 142 (2019), 9-36.








\end{thebibliography}
\end{document}